\numberwithin{equation}{section}
\renewcommand*{\@fnsymbol}[1]{\ensuremath{\ifcase#1\or 1\or 2\or
   3\else\@ctrerr\fi}}
\newtheorem{theorem}{Theorem}
\newtheorem{proposition}{Proposition}[section]
\theoremstyle{remark}
\newtheorem{remark}[proposition]{Remark}
\renewcommand{\Re}{\mathrm{Re}\,}
\renewcommand{\Im}{\mathrm{Im}\,}
\newcommand{\R}{\mathbb{R}}
\newcommand{\eps}{\varepsilon}
\newcommand{\la}{\lambda}
\newcommand{\An}{\mathcal{H}}  
\title{An Ikehara-type theorem for~functions convergent to zero}
\author{Dmitri Finkelshtein\thanks{Department of Mathematics, Swansea University, Singleton Park, Swansea SA2 8PP, U.K. ({\tt d.l.finkelshtein@swansea.ac.uk}).} \and Pasha Tkachov\thanks{Gran Sasso Science Institute, Viale Francesco Crispi, 7, 67100 L'Aquila AQ, Italy ({\tt pasha.tkachov@gssi.it}).}}
\begin{document}
\maketitle

\begin{abstract}
We prove an analogue of the Ikehara theorem for positive non-increasing functions convergent to zero, generalising the results postulated in Diekmann,\,Kaper (1978) \cite{DK1978} and Carr,\,Chmaj (2004) \cite{CC2004}.

\textbf{Keywords:} Ikehara theorem, complex Tauberian theorem, Laplace transform, asymptotic behavior, traveling waves 

\textbf{2010 Mathematics Subject Classification:} 40E05, 44A10, 35B40    

\end{abstract}

\section{Introduction} 

The Ikehara theorem and its extensions are the so-called complex Tauberian theorems, inspired, in particular, by the number theory, see e.g. the review \cite{Kor2002}. The following version of the Ikehara theorem can be found in \cite[Subsection~2.5.7]{EE1985}: 
\begin{theorem}\label{thm:I}
  Let $\phi$ be a positive monotone increasing function, and let there exist $\mu>0$, $j>0$, such that 
  \begin{equation}\label{eq:laplace_of_a}
    \int_{0}^{\infty} e^{-tz}\phi(t)dt = \frac{F(z)}{(z-\mu)^{j}}, \quad \Re z>\mu,
  \end{equation}
  where $F$ is holomorphic on $\{\Re z\geq \mu\}$.
  Then, for some $D>0$,
  \[
    \phi(t)\sim \frac{D}{\Gamma(j)} t^{j-1} e^{\mu t}, \quad t\to\infty.
  \]
\end{theorem}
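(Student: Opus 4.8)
The plan is to identify the constant as $D=F(\mu)$ and to reduce the claim to the asymptotics of $g(t):=e^{-\mu t}\phi(t)$, for which I would establish $g(t)\sim \frac{D}{\Gamma(j)}t^{j-1}$ as $t\to\infty$. The natural comparison function is $v(t)=\frac{D}{\Gamma(j)}t^{j-1}e^{\mu t}$, whose Laplace transform is exactly $D/(z-\mu)^{j}$; subtracting it removes the leading singularity, since
\[
  \int_{0}^{\infty}e^{-tz}\bigl(\phi(t)-v(t)\bigr)\,dt=\frac{F(z)-F(\mu)}{(z-\mu)^{j}}=:G(z),\qquad \Re z>\mu,
\]
and $F(z)-F(\mu)=O(z-\mu)$ makes $G$ less singular on the line $\Re z=\mu$ than $\int_0^\infty e^{-tz}\phi(t)\,dt$ itself.

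The core is a Wiener--Ikehara Fourier argument carried out on the vertical line. Writing $w=z-\mu=\eps+iy$, the boundary value $G(\mu+\eps+iy)$ is (up to the comparison term) the Fourier transform of the integrable function $e^{-\eps t}g(t)\mathbf{1}_{\{t>0\}}$. I would convolve the difference of transforms against a Fej\'er-type kernel $k_{\la}$ whose Fourier transform is nonnegative and supported in $[-\la,\la]$, let $\eps\downarrow0$ using the boundary regularity of $G$ established above, and then invoke the Riemann--Lebesgue lemma. This yields that a kernel-averaged version of $g(t)-\frac{D}{\Gamma(j)}t^{j-1}$ is $o(t^{j-1})$, equivalently $\int g(t-u)k_{\la}(u)\,du\sim \frac{D}{\Gamma(j)}t^{j-1}$ as $t\to\infty$.

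The final step is the Tauberian de-smoothing, where the monotonicity of $\phi$ enters. Since $\phi$ is non-decreasing, over a window $[x,x+a]$ one has $\phi(x)\int_{x}^{x+a}e^{-\mu t}\,dt\le \int_{x}^{x+a}g(t)\,dt\le \phi(x+a)\int_{x}^{x+a}e^{-\mu t}\,dt$, and because $t^{j-1}$ is slowly varying on windows of fixed width at large $x$, the averaged asymptotic $\int_{x}^{x+a}g\sim \frac{D}{\Gamma(j)}a\,x^{j-1}$ sandwiches $g(x)/x^{j-1}$ between bounds that both converge to $D/\Gamma(j)$ as $a\downarrow0$. Positivity of $\phi$ is what converts these one-sided estimates into matching two-sided ones and forces $D\ge0$.

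I expect the main obstacle to be the boundary singularity of $G$ at $z=\mu$ when $j$ is large: the trace $G(\mu+iy)$ behaves like $|y|^{-(j-1)}$ near $y=0$, which fails to be locally integrable once $j\ge2$, so the Riemann--Lebesgue step does not apply directly. I would resolve this by subtracting finitely many further Taylor terms of $F$ at $\mu$, replacing $v$ by $v(t)+\sum_{m\ge1}\frac{F^{(m)}(\mu)}{m!\,\Gamma(j-m)}t^{j-m-1}e^{\mu t}$ up to the last index with $j-m>0$; each added term is of order $t^{j-m-1}=o(t^{j-1})$ and hence irrelevant to the leading asymptotics, while enough of them reduce the residual boundary singularity to an integrable one. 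Reconciling the polynomial weight $t^{j-1}$ with the exponential scaling in the Tauberian sandwich---absent in the classical $j=1$ case---is the other point requiring care.
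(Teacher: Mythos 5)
The paper itself never proves Theorem~\ref{thm:I}: it is quoted verbatim from \cite[Subsection~2.5.7]{EE1985}, and when the authors need results of this type (Theorem~\ref{thm:tauber}, Propositions~\ref{prop:Carr_Chmaj} and \ref{prop:Diekmann_Kaper}) they do not redo a Wiener--Ikehara argument; they verify the hypotheses of Tenenbaum's ``effective'' Ikehara--Ingham theorem, Proposition~\ref{lem:EII}, which encapsulates all of the Fourier-kernel analysis you are attempting to rebuild. Your proposal is therefore the from-scratch classical route: comparison function $v(t)=\frac{F(\mu)}{\Gamma(j)}t^{j-1}e^{\mu t}$, boundary regularity of $G(z)=(F(z)-F(\mu))/(z-\mu)^{j}$, kernel smoothing, Riemann--Lebesgue, then de-smoothing via monotonicity. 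The skeleton is the right one, and you correctly spotted the Fourier-side obstruction for $j\geq 2$ (the trace $|y|^{1-j}$ is not locally integrable) together with its correct repair (subtract the Taylor polynomial of $F$ at $\mu$).

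There is, however, a second obstruction at exactly the same threshold $j\geq2$ which your proposal does not address and which breaks the argument as written --- and $j=2$ is precisely the case motivating the paper (minimal-speed waves). The Fej\'er kernel decays only like $|u|^{-2}$, while $v(t-u)\mathbf{1}_{\{t-u>0\}}$ grows like $|u|^{j-1}$ as $u\to-\infty$; hence for $j\geq 2$ the convolutions $\int v(t-u)k_{\lambda}(u)\,du$ and $\int g(t-u)k_{\lambda}(u)\,du$ (if $g$ indeed has the claimed growth) are $+\infty$ for every $t$, logarithmically divergent already at $j=2$. Your intermediate claim ``$\int g(t-u)k_{\lambda}(u)\,du\sim\frac{D}{\Gamma(j)}t^{j-1}$'' is then vacuous, and the $\varepsilon\downarrow 0$ interchange degenerates into $\infty-\infty$; the Taylor subtraction does not help here, because the Tauberian sandwich needs the smoothed integral of $g$ \emph{itself} to be finite. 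The standard repair must be stated explicitly: replace the Fej\'er kernel by a nonnegative, rapidly decreasing, band-limited kernel, e.g.\ take $m$ smooth, real, even, supported in $[-\lambda/2,\lambda/2]$ and let $k_{\lambda}$ have Fourier transform $m*m$, so that $k_{\lambda}=(\check{m})^{2}\geq 0$ decays faster than any power while $\widehat{k_{\lambda}}$ remains supported in $[-\lambda,\lambda]$. (Note also that what the sandwich step uses is nonnegativity of $k_{\lambda}$, not of its Fourier transform.) Alternatively, and closer to the paper, one can integrate by parts to pass to $d\phi$, apply Proposition~\ref{lem:EII} with $d\alpha=d\phi$ and $a=\mu$ --- your Taylor-subtraction computation is then exactly the verification of condition \eqref{eq:eta} --- and read off the conclusion from \eqref{eq:alpha}. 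Finally, a small point: your argument identifies $D=F(\mu)$ and only yields $D\geq0$; the conclusion $D>0$ as stated requires $F(\mu)\neq0$, which the theorem implicitly assumes (for $\phi(t)=e^{\mu t/2}$ one has $F(\mu)=0$ and the asymptotics fails in the strict $\sim$ sense).
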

Alternatively, Theorem~\ref{thm:I} may be formulated for the Stieltjes measure $d\phi(t)$ instead of $\phi(t)dt$ obtaining similar asymptotic for $\phi$ (see e.g. Proposition~\ref{lem:EII} below). 

In Theorem~\ref{thm:I}, $\phi$ increases to $\infty$. In~\cite[Lemma~6.1]{DK1978} (for $j=1$) and in~\cite[Proposition~2.3]{CC2004} (for $j>0$), the similar results were stated for positive monotone decreasing $\varphi$ (cf., correspondingly, Propositions~\ref{prop:Diekmann_Kaper} and \ref{prop:Carr_Chmaj} below). The aim of both generalizations was to find an \emph{a~priori} asymptotics for solutions to a class of nonlinear integral equations. In~\cite{CC2004}, in particular, it was applied to the study of the uniqueness of traveling wave solutions to certain nonlocal reaction-diffusion equations; see also e.g.~\cite{WLX2018,TPT2016,LZ2016,CDM2008,ZLW2012}. Note also that then, the case $j=2$ corresponded to the traveling wave with the minimal speed.

In both papers \cite{DK1978,CC2004} no proof was given, mentioning that it is supposed to be analogous to the case of increasing $\phi$ without any further details. In~Theorem~\ref{thm:tauber} below, we prove an analogue of Theorem~\ref{thm:I} for non-increasing function, and in Proposition~\ref{prop:Carr_Chmaj} we apply it to prove the mentioned result of~\cite{CC2004}. We require, however, an {\em a~priori} regular decaying of $\varphi$, namely, we assume that there exists $\nu>0$, such that $\varphi(t)e^{\nu t}$ is an increasing function.
We require also the convergence of  $\int_0^\infty e^{zt}d\varphi(t)$ for $0<\Re z<\mu$ instead of the weaker corresponding assumption for $\int_0^\infty e^{zt}\varphi(t)dt$.

Beside the aim to present a proof, the reason for the generalization we provide was to omit the requirement on the function $F$ to be analytical on the line $\{\Re z=\mu\}$ keeping the general case $j>0$. We were motivated by the integro-differential equation we studied in \cite{FKT100-2} (which covers the equations considered in \cite{CC2004}), where the Laplace-type transform of the traveling wave with the minimal speed (that requires, recall, $j=2$)
 might be not analytical at $z=\mu$.

Our result is based on a version of the Ikehara--Ingham theorem proposed in \cite{Ten1995}, see Proposition~\ref{lem:EII} below. Using the latter result, we prove also in Proposition~\ref{prop:Diekmann_Kaper} a generalization of \cite[Lemma~6.1]{DK1978}  (under the regularity assumptions on $\varphi$ mentioned above).

\section{Main results}
Let, for any $D\subset\mathbb{C}$, $\An(D)$
 be the class of all holomorphic functions on $D$.
\begin{theorem}\label{thm:tauber}
Let $\varphi:\R_+\to  \R_+:=[0,\infty)$ be a non-increasing function such that, for some $\mu>0$, $\nu>0$,
\begin{equation}\label{assum:1}
    \text{the function } e^{\nu t} \varphi(t) \text{ is non-decreasing,}
\end{equation}
and 
\begin{equation}\label{eq:psi_onesided_lap}
    \int\limits_0^{\infty}e^{z t}d\varphi(t)<\infty, \quad 0< \Re z< \mu. 
\end{equation}
Let also the following assumptions hold.
\begin{enumerate}
  \item There exist a constant $j>0$ and complex-valued functions 
    \[
      H\in\An(0<\Re z\leq \mu), \qquad  F\in\An (0<\Re z<\mu)\cap C(0<\Re z\leq \mu),
    \]
    such that the following representation holds
    \begin{equation}\label{eq:sing_repres}
      \int\limits_0^{\infty}e^{z t}\varphi(t)dt=\dfrac{F(z)}{(\mu-z)^j}+H(z),\quad 0<\Re z<\mu.
    \end{equation}
  \item For any $T>0$,
    \begin{equation}\label{loglim}
      \lim_{\sigma\to0+}g_j(\sigma)\sup_{|\tau|\leq T}\bigl\lvert F(\mu-2\sigma-i\tau)-F(\mu-\sigma-i\tau)\bigr\rvert=0,
    \end{equation}
  where, for $\sigma>0$,
  \begin{equation}\label{eq:def_gj}
      g_j(\sigma):=\begin{cases}
    \sigma^{j-1}, & 0<j<1,\\
    \log \sigma, & j=1,\\
    1, &j>1.
    \end{cases}
  \end{equation}
\end{enumerate}
Then $\varphi$ has the following asymptotic
\begin{equation}\label{eq:psi_asympt}
  \varphi(t)\sim \frac{F(\mu)}{\Gamma(j)} t^{j-1}e^{-\mu t} ,\quad t\to \infty.
\end{equation}
\end{theorem}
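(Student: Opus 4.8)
The plan is to reduce the statement to the increasing‑function (Stieltjes) Ikehara--Ingham theorem, Proposition~\ref{lem:EII}, by the substitution $\chi(t):=e^{\nu t}\varphi(t)$, which is non‑decreasing by \eqref{assum:1}. First I would note that we may assume $\nu>\mu$ without loss of generality: if $e^{\nu t}\varphi(t)$ is non‑decreasing, then so is $e^{\nu' t}\varphi(t)=e^{(\nu'-\nu)t}\,e^{\nu t}\varphi(t)$ for every $\nu'\ge\nu$, being a product of non‑negative non‑decreasing factors, so we may enlarge $\nu$ until $w_0:=\nu-\mu>0$. The whole point of the substitution is that the decaying singularity of $\varphi$ at $z=\mu$ becomes a growing‑type singularity of $\chi$ at the \emph{positive} abscissa $w_0$, which is exactly the configuration handled by Proposition~\ref{lem:EII}.

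Next I would translate the transform. Since $\chi(t)e^{-wt}=e^{(\nu-w)t}\varphi(t)$, the change of variable $z=\nu-w$ maps the strip $0<\Re z<\mu$ onto $w_0<\Re w<\nu$, and \eqref{eq:sing_repres} becomes a representation for $\int_0^\infty e^{-wt}\chi(t)\,dt$ with singular factor $(w-w_0)^{-j}$, numerator $\tilde F(w):=F(\nu-w)$, and regular part $\tilde H(w):=H(\nu-w)$. Using \eqref{eq:psi_onesided_lap} to secure convergence of $\int_0^\infty e^{(\nu-w)t}\,d\varphi(t)$ on this strip, an integration by parts (whose boundary terms vanish because $e^{(\nu-w)t}\varphi(t)\to0$ for $\Re w>w_0$, by convergence of the transform and monotonicity of $\varphi$) yields the Stieltjes transform
\[
\int_0^\infty e^{-wt}\,d\chi(t)=\frac{w\,\tilde F(w)}{(w-w_0)^j}+w\tilde H(w)-\varphi(0),\qquad w_0<\Re w<\nu .
\]
Here $\tilde H$ is holomorphic across the critical line $\Re w=w_0$ (since $H\in\An(0<\Re z\le\mu)$), while $w\tilde F$ is continuous up to that line (since $F\in C(0<\Re z\le\mu)$), with singular coefficient $w_0\tilde F(w_0)=w_0F(\mu)$ at $w=w_0$.

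The remaining hypothesis of Proposition~\ref{lem:EII} is the boundary‑regularity condition controlling $g_j(\sigma)$ times the oscillation of the singular numerator as $\Re w\downarrow w_0$. Writing $w=w_0+\sigma+i\tau$ gives $\nu-w=\mu-\sigma-i\tau$, so for $\tilde F$ this condition reads precisely as \eqref{loglim}; the extra factor $w$ is harmless, since with $A=w_0+2\sigma+i\tau$, $B=w_0+\sigma+i\tau$ one has $A\tilde F(A)-B\tilde F(B)=A\,(\tilde F(A)-\tilde F(B))+\sigma\tilde F(B)$, a bounded multiple of the $\tilde F$‑difference plus a term that is $O(\sigma)$, and $\sigma\,g_j(\sigma)\to0$ in all three cases of \eqref{eq:def_gj}. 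With every hypothesis verified, Proposition~\ref{lem:EII} yields $\chi(t)\sim\frac{w_0F(\mu)}{w_0\,\Gamma(j)}t^{j-1}e^{w_0t}=\frac{F(\mu)}{\Gamma(j)}t^{j-1}e^{(\nu-\mu)t}$, whence $\varphi(t)=e^{-\nu t}\chi(t)$ satisfies \eqref{eq:psi_asympt}; the abscissa $w_0$ cancels, consistently with the fact that the limit cannot depend on the chosen $\nu$.

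The main obstacle is to ensure that the transformed data meet the precise hypotheses of Proposition~\ref{lem:EII} rather than merely those of the classical Theorem~\ref{thm:I}. In particular $\tilde F$ and $\tilde H$ are controlled only on the vertical strip $w_0\le\Re w<\nu$, not on the whole closed half‑plane $\{\Re w\ge w_0\}$; this is why one must invoke the local Ikehara--Ingham form (using only continuity up to the critical line together with \eqref{loglim}) instead of requiring global holomorphy of the numerator, and why enlarging $\nu$ so that $w_0>0$ is needed to place the singularity at a genuine positive abscissa. Checking that \eqref{loglim} is exactly the oscillation condition demanded by Proposition~\ref{lem:EII}, after the reflection $z\mapsto\nu-z$ and the appearance of the factor $w$, is the one genuinely delicate point; the rest is the routine bookkeeping of the substitution.
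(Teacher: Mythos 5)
Your reduction is, modulo notation, the paper's own first step: the paper sets $\alpha(t)=e^{(\mu+a)t}\varphi(t)$ with $a>0$, $\mu+a>\nu$, so its $a$ is your $w_0$ and its $\mu+a$ is your enlarged $\nu$. That part is sound, and your ``delicate point'' about the extra factor $w$ is in fact a non-issue: the definition \eqref{eq:G} divides the Stieltjes transform by $a+z=w$, so this factor cancels identically and one is left with
\[
G(z)=\frac{F(\mu-z)-F(\mu)}{z^{j}}+K(z),\qquad K\in\An(0<\Re z<\mu).
\]
A small but real omission in this part: Proposition~\ref{lem:EII} requires convergence of \eqref{eq:onesided_laplace} on the half-plane $\Re z>a$, whereas your change of variables controls the transform only on the strip $w_0<\Re w<\nu$; the paper closes this by citing Widder's theorem that a Laplace--Stieltjes transform of a non-decreasing function, once convergent at a point, converges on the whole half-plane to the right of it.

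The genuine gap is your claim that the remaining hypothesis of Proposition~\ref{lem:EII} ``reads precisely as \eqref{loglim}''. It does not. The hypothesis is the \emph{integral} condition \eqref{eq:eta}, namely $\sigma^{j-1}\int_{-T}^{T}\lvert G(2\sigma+i\tau)-G(\sigma+i\tau)\rvert\,d\tau\to0$, whereas \eqref{loglim} is a \emph{sup}-type condition on $F$ alone, with the weight $g_j(\sigma)$, which differs from $\sigma^{j-1}$ except when $0<j<1$. Moreover, the two quotients inside $G(2\sigma+i\tau)-G(\sigma+i\tau)$ have different denominators $(2\sigma+i\tau)^{j}$ and $(\sigma+i\tau)^{j}$, so their difference is not controlled by the oscillation of the numerator alone. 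Bridging this mismatch is the analytic core of the paper's proof: one writes the difference as
\[
\frac{F(\mu-2\sigma-i\tau)-F(\mu-\sigma-i\tau)}{(\sigma+i\tau)^{j}}
+\bigl(F(\mu-2\sigma-i\tau)-F(\mu)\bigr)\Bigl(\frac{1}{(2\sigma+i\tau)^{j}}-\frac{1}{(\sigma+i\tau)^{j}}\Bigr)
\]
(plus a harmless holomorphic part), proves the estimate $h_j(\sigma):=\sigma^{j-1}\int_{-T}^{T}(\sigma^{2}+\tau^{2})^{-j/2}\,d\tau=O(g_j(\sigma))$ --- a case analysis in $j$ which is exactly where the three-case definition \eqref{eq:def_gj} of $g_j$ comes from --- then kills the first term by \eqref{loglim}, and the second by the continuity of $F$ up to the line $\{\Re z=\mu\}$, splitting the $\tau$-integral at $\lvert\tau\rvert=\sqrt{\sigma}$ and using a contour-integral bound for the kernel difference. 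Your proposal uses neither \eqref{loglim} in this integrated form nor the continuity of $F$ on the closed strip; as written, the hypothesis \eqref{eq:eta} is unverified, the application of Proposition~\ref{lem:EII} is unjustified, and the skipped verification is most of the actual proof.
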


The proof of Theorem~\ref{thm:tauber} is based on the following Tenenbaum's result.
\begin{proposition}[{``Effective'' Ikehara--Ingham Theorem, cf.~{\cite[Theorem 7.5.11]{Ten1995}}}]\label{lem:EII}
Let $\alpha(t)$ be a non-decreasing function such that, for some fixed $a>0$, the following integral converges:
\begin{equation}\label{eq:onesided_laplace}
\int\limits_0^{\infty}e^{-z t}d\alpha(t), \quad \Re z>a.
\end{equation}
Let also there exist constants $D\geq0$ and $j>0$, such that for the functions
\begin{align}\label{eq:G}
G(z)&:=\dfrac{1}{a+z}\int\limits_0^\infty e^{-(a+z)t}d\alpha(t)-\dfrac{D}{z^{j}},\quad \Re z>0,\\
\eta(\sigma,T)&:=\sigma^{j-1}\int\limits_{-T}^{T}\bigl\lvert G(2\sigma+i\tau)-G(\sigma+i\tau)\bigr\rvert d\tau, \quad T>0,\label{eq:etaform}
\end{align}
one has that
\begin{equation}\label{eq:eta}
\lim_{\sigma\to0+}\eta(\sigma,T)=0, \quad T>0.
\end{equation}
Then
\begin{equation}\label{eq:alpha}
\alpha(t)=\left\{ \dfrac{D}{\Gamma(j)}+O\bigl(\rho(t)\bigr)\right\}  e^{at}t^{j-1},\quad t\geq1,
\end{equation}
where
\begin{equation}\label{eq:rho}
\rho(t):=\inf\limits_{T\geq32(a+1)} \Bigl\{ T^{-1}+\eta\bigl(t^{-1},T\bigr)+(Tt)^{-j} \Bigr\}.
\end{equation}
\end{proposition}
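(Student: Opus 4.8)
The plan is to reconstruct the effective Wiener--Ikehara argument through Fej\'er smoothing together with a Tauberian de-smoothing step, the monotonicity of $\alpha$ serving as the Tauberian input. First I would recast the hypotheses in terms of a single remainder. Writing $f(t):=e^{-at}\alpha(t)$ and integrating by parts in \eqref{eq:G} (normalising $\alpha(0)=0$, which is harmless), one has $\frac{1}{a+z}\int_0^\infty e^{-(a+z)t}\,d\alpha(t)=\int_0^\infty e^{-zt}f(t)\,dt$, while $\int_0^\infty e^{-zt}\frac{D}{\Gamma(j)}t^{j-1}\,dt=D/z^{j}$ for $\Re z>0$. Hence $G$ is exactly the Laplace transform of the remainder
\[
  r(t):=e^{-at}\alpha(t)-\frac{D}{\Gamma(j)}t^{j-1},\qquad G(z)=\int_0^\infty e^{-zt}r(t)\,dt,\quad \Re z>0,
\]
so that \eqref{eq:alpha} becomes the effective bound $r(t)=O\bigl(\rho(t)\,t^{j-1}\bigr)$ as $t\to\infty$. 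The absolute convergence granted by \eqref{eq:onesided_laplace} for $\Re z>a$, after subtraction of the explicit pole, guarantees that $G$ is holomorphic and integrable along each vertical line $\Re z=\sigma>0$, which legitimises the inversions below.

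The central device is the difference occurring in \eqref{eq:etaform}. Put $\phi_\sigma(t):=\bigl(e^{-\sigma t}-e^{-2\sigma t}\bigr)r(t)$; its Fourier transform over $(0,\infty)$ is $\int_0^\infty e^{-i\tau t}\phi_\sigma(t)\,dt=G(\sigma+i\tau)-G(2\sigma+i\tau)$, whence
\[
  \int_{-T}^{T}\bigl\lvert G(2\sigma+i\tau)-G(\sigma+i\tau)\bigr\rvert\,d\tau=\sigma^{1-j}\eta(\sigma,T)
\]
controls $\phi_\sigma$ in the band $\lvert\tau\rvert\le T$. The weight $e^{-\sigma t}-e^{-2\sigma t}=e^{-\sigma t}\bigl(1-e^{-\sigma t}\bigr)$ is bounded below by a positive absolute constant on the scale $t\asymp 1/\sigma$, so that $\phi_{1/t}(t)$ is a fixed multiple of $r(t)$; this is precisely what forces the choice $\sigma=1/t$ in \eqref{eq:rho}, since recovering $\phi_\sigma$ at the point $t=1/\sigma$ then recovers $r(t)$ itself.

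Next I would recover a smoothed value of $\phi_\sigma$ by Fourier inversion against a non-negative Fej\'er kernel $\psi_T$ with $\widehat{\psi_T}(\tau)=\bigl(1-\lvert\tau\rvert/T\bigr)_{+}$ supported in $[-T,T]$. Then
\[
  (\phi_\sigma*\psi_T)(x)=\frac{1}{2\pi}\int_{-T}^{T}\bigl(G(\sigma+i\tau)-G(2\sigma+i\tau)\bigr)\widehat{\psi_T}(\tau)e^{i\tau x}\,d\tau,
\]
so that $\lvert(\phi_\sigma*\psi_T)(x)\rvert\le\frac{1}{2\pi}\sigma^{1-j}\eta(\sigma,T)$. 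Non-negativity of $\psi_T$ and monotonicity of $\alpha$ are then used to sandwich the pointwise value of $r$ (equivalently of $\alpha$) at $x=t=1/\sigma$ between smoothed averages taken slightly to the left and to the right of $t$: passing from the smoothed quantity back to the pointwise one costs a multiplicative factor $e^{\pm O(1/T)}$ from the monotonicity of $\alpha$, which against the main term $t^{j-1}$ yields the error $T^{-1}$, while the truncation of the spectral integral at height $T$ --- reflected in the convolution of the explicit term $\frac{D}{\Gamma(j)}t^{j-1}$ against the band-limited kernel, together with the slow ($u^{-2}$) decay of $\psi_T$ --- accounts for the residual $(Tt)^{-j}$. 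Collecting the three contributions and optimising over the admissible range $T\ge 32(a+1)$ reproduces $\rho(t)$ of \eqref{eq:rho}.

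The hard part is the effective bookkeeping rather than any individual estimate: one must carry the three error sources --- the boundary term $\eta(1/t,T)$, the de-smoothing loss $T^{-1}$, and the truncation term $(Tt)^{-j}$ --- simultaneously and uniformly in $T$, and verify that the normalisation $\sigma^{j-1}$ built into \eqref{eq:etaform} exactly cancels the factor $\sigma^{1-j}=t^{j-1}$ arising from the weight at the scale $t=1/\sigma$, so that no spurious power of $t$ survives. The most delicate point is the Tauberian de-smoothing itself, since the Fej\'er kernel is not compactly supported; the monotonicity sandwich therefore leaves tail contributions that must be estimated directly, and it is here that the convolution of $t^{j-1}$ against the truncated kernel splits into the regimes $0<j<1$, $j=1$ and $j>1$ and that the precise Gamma-integral asymptotics enter. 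Throughout, the interchange of integration in the inversion formula is justified by the line integrability of $G$ noted in the first step.
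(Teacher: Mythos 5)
You should first be aware that the paper contains no proof of this proposition: it is imported verbatim from Tenenbaum \cite[Theorem 7.5.11]{Ten1995}, and the paper's own contribution (Theorem~\ref{thm:tauber}) is deduced \emph{from} it. So the only meaningful comparison is with Tenenbaum's proof, whose architecture your sketch does reproduce correctly in outline: $G$ is the Laplace transform of the remainder $r(t)=e^{-at}\alpha(t)-\frac{D}{\Gamma(j)}t^{j-1}$; the difference $G(\sigma+i\tau)-G(2\sigma+i\tau)$ is the Fourier transform of $\bigl(e^{-\sigma t}-e^{-2\sigma t}\bigr)r(t)$, with the choice $\sigma=1/t$ making the weight of constant size at the point of interest; one smooths against a non-negative kernel with band-limited spectrum and bounds the smoothed remainder by $\eta(\sigma,T)$; monotonicity of $\alpha$ supplies the Tauberian de-smoothing. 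One minor inaccuracy: your claim that $G$ is integrable along vertical lines is unjustified (the Fourier transform of the $L^1$ function $e^{-\sigma t}r(t)$ is bounded and continuous but need not be $L^1$), yet it is also unnecessary, since your inversion is against a multiplier supported in $[-T,T]$, and $\phi_\sigma\in L^1$ together with the compact spectral support of the Fej\'er kernel already legitimises the convolution identity.

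The genuine gap is the quantitative de-smoothing step, which is the entire content of the ``effective'' theorem. Your stated mechanism --- that passing from the smoothed to the pointwise value ``costs a multiplicative factor $e^{\pm O(1/T)}$ from the monotonicity of $\alpha$'' --- does not hold for the Fej\'er kernel. Sandwiching over a window of width $\delta$ incurs two losses simultaneously: a tail-mass deficit of order $(T\delta)^{-1}$, because the Fej\'er kernel decays only like $T^{-1}v^{-2}$, and a weight drift of order $(a+1)\delta$ from the factors $e^{-au}$ and $e^{-\sigma u}-e^{-2\sigma u}$ across the window. Since $(a+1)\delta+(T\delta)^{-1}\geq 2\sqrt{(a+1)/T}$ for every $\delta>0$, no choice of window yields relative error $O(T^{-1})$; the naive sandwich saturates at $O(T^{-1/2})$. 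Obtaining the $T^{-1}$ in \eqref{eq:rho} requires an additional device --- an absorption/iteration step of Ganelius type, in which the quantity being estimated reappears on the right-hand side multiplied by a contraction factor (this is where the threshold $T\geq 32(a+1)$ in \eqref{eq:rho} earns its keep) --- and your proposal contains no trace of it. A second, related omission: the lower-bound half of your sandwich needs an a priori bootstrap bound $\alpha(u)=O\bigl(e^{au}u^{j-1}\bigr)$ to control the kernel tails against the unbounded non-negative part $\bigl(e^{-\sigma u}-e^{-2\sigma u}\bigr)e^{-au}\alpha(u)$; you attribute the tail difficulty solely to the explicit term $t^{j-1}$, which is the harmless half. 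It is worth noting that the weaker remainder your argument can genuinely deliver, with $T^{-1}$ replaced by $T^{-1/2}$, still satisfies $\rho(t)\to0$ and would therefore suffice for the application in Theorem~\ref{thm:tauber}; but it does not prove the proposition as stated.
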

\begin{proof}[Proof of Theorem~\ref{thm:tauber}]
We first express $\int_{0}^{\infty}e^{\la t}\varphi(t)dt$ in the form \eqref{eq:onesided_laplace}. Fix any $a>0$ such that $\mu+a>\nu$. Then, 
by \eqref{assum:1}, the function 
\begin{equation}\label{eq:alpha-new}
      \alpha(t):= e^{(\mu+a)t}\varphi(t), \quad t>0,
\end{equation}
is increasing. Since $\varphi$ is monotone, then, for
 any $0<\Re z<\mu$, one has
\begin{equation}\label{eq:psi_iprime}
\int\limits_0^\infty e^{-(a+z)t}d\alpha(t)=(\mu+a)\int\limits_0^{\infty}e^{(\mu-z)t}\varphi(t)dt +\int\limits_0^\infty e^{(\mu-z)t}d\varphi(t),
\end{equation}
where both integrals in the right hand side of \eqref{eq:psi_iprime} converge, for $0<\Re z<\mu$, because of \eqref{eq:psi_onesided_lap}--\eqref{eq:sing_repres}.
 
Then, by \cite[Corollary~II.1.1a]{Wid1941}, the integral in the 
left hand side of \eqref{eq:psi_iprime} converges, for \emph{all} $\Re z>0$. 
Therefore, by \cite[Theorem II.2.3a]{Wid1941}, one gets 
another representation for the latter integral, for $\Re z>0$:
\begin{equation}\label{eq:psi_i}
\int\limits_0^\infty e^{-(a+z)t}d\alpha(t)=-\varphi(0)+(a+z)\int\limits_0^\infty e^{(\mu-z)t}\varphi(t)dt.
\end{equation}

Let $G$ be given by \eqref{eq:G} with $\alpha(t)$ as above and $D:=F(\mu)$. 
Combining \eqref{eq:psi_i} with \eqref{eq:sing_repres} (where we replace $z$ by $\mu-z$),
we obtain, for $0<\Re z<\mu$,
\begin{align}
G(z)&=\dfrac{F(\mu-z)}{z^j}+ K(z), \\ K(z)&:= H(\mu-z)-\dfrac{\varphi(0)}{a+z}-\frac{F(\mu)}{z^j}.
\end{align}

Check the condition \eqref{eq:eta}; one can assume, clearly, that 
$0<\sigma<\frac{\mu}{2}$. 
 Since 
$K\in\An(0<\Re z <\mu)$,
 one easily gets that
\begin{align}
&\quad \lim\limits_{\sigma\to  0+} \sigma^{j-1}\int\limits_{-T}^{T} \bigl\lvert G(2\sigma+i\tau)-G(\sigma+i\tau)\bigr\rvert d\tau \notag \\
 &\leq \lim\limits_{\sigma\to  0+}\sigma^{j-1}\int\limits_{-T}^{T}\Bigl\lvert \dfrac{F(\mu-2\sigma-i\tau)-F(\mu)}{(2\sigma+i\tau)^j}-\dfrac{F(\mu-\sigma-i\tau)-F(\mu)}{(\sigma+i\tau)^j}\Bigr\rvert d\tau\notag\\
 &\leq \lim\limits_{\sigma\to  0+}\sigma^{j-1}\int\limits_{-T}^{T}\Bigl\lvert \dfrac{F(\mu-2\sigma-i\tau)-F(\mu-\sigma-i\tau)}{(\sigma+i\tau)^j}\Bigr\rvert d\tau\notag\\
 &\quad+\lim\limits_{\sigma\to  0+}\sigma^{j-1}\int\limits_{-T}^{T}\bigl\lvert F(\mu-2\sigma-i\tau)-F(\mu)\bigr\rvert \Bigl\lvert \dfrac{1}{(2\sigma+i\tau)^j}-\dfrac{1}{(\sigma+i\tau)^j}\Bigr\rvert d\tau\notag,\\
 &=: \lim\limits_{\sigma\to  0+} A_j(\sigma)+\lim\limits_{\sigma\to  0+} B_j(\sigma).\label{ABexpans}
 \end{align}

Prove that both limits in \eqref{ABexpans} are equal to $0$. For each $j>0$, we define the function 
\begin{equation}\label{eq:ssa}
    h_j(\sigma):=\sigma^{j-1}
    \int_{-T}^T\frac{1}{(\sigma^2+\tau^2)^\frac{j}{2}}d\tau, \qquad \sigma>0.
\end{equation}
We have then
 \begin{equation}\label{eq:ineqforAs}
    A_j(\sigma) \leq \sup_{|\tau|\leq T}\bigl\lvert F(\mu-2\sigma-i\tau)-F(\mu-\sigma-i\tau)\bigr\rvert \ h_j(\sigma).
 \end{equation}

It is straighforward to check that
\begin{equation}\label{eq:as-h1}
    h_1(\sigma)= 2\log \dfrac{\sqrt{T^2+\sigma^2}+T}{\sigma} \sim -2\log \sigma, \quad \sigma\to0+.
\end{equation}
For $j\neq 1$, we make the substitution $\tau=\sigma \tan t$ in \eqref{eq:ssa}, then
\begin{equation}\label{eq:sadsa22}
    h_j(\sigma)=
 \int_{-\arctan\frac{T}{\sigma}}^{\arctan\frac{T}{\sigma}}(\cos t)^{j-2}\, d t.
\end{equation}
Therefore, 
\begin{equation}\label{eq:as-h2}
    h_j(\sigma)\leq 2\arctan\frac{T}{\sigma}<\pi, \qquad \sigma>0, \ j\geq 2.
\end{equation}
Let now $0<j<2$, $j\neq 1$. Then replacing $t$ by $\frac{\pi}{2}-t$ in \eqref{eq:sadsa22}, we obtain
\begin{equation}\label{eq:hsadhdsa}
    h_j(\sigma)=2\int_{\frac{\pi}{2}-\arctan\frac{T}{\sigma}}^{\frac{\pi}{2}}(\sin t)^{j-2}\,dt\leq 2^{3-j}\int_{\frac{\pi}{2}-\arctan\frac{T}{\sigma}}^{\frac{\pi}{2}} t^{j-2}\,dt,
\end{equation}
since $\sin t > \frac{t}{2}$, $t\in(0,\frac{\pi}{2}]$. Therefore, 
\begin{equation}\label{eq:as-h12}
    h_j(\sigma)\leq 2^{3-j}\int_{0}^{\frac{\pi}{2}} t^{j-2}\,dt=\frac{2^{4-2j}\pi^{j-1}}{j-1}, \quad \sigma>0, \ 1<j<2.
\end{equation}
Finally, for $0<j<1$, we obtain from \eqref{eq:hsadhdsa}, that
\begin{equation}\label{eq:sdsadfewr}
    h_j(\sigma)\leq \frac{2^{3-j}}{1-j} \biggl( 
  \Bigl( \frac{\pi}{2}-\arctan\frac{T}{\sigma}\Bigr)^{j-1}-\Bigl( \frac{\pi}{2}\Bigr)^{j-1}\biggr)
\end{equation}
Since $\arctan x+\arctan\frac{1}{x}=\frac{\pi}{2}$, $x>0$, we get
\[
  \arctan\frac{T}{\sigma}=\frac{\pi}{2}-\arctan\frac{\sigma}{T}\sim\frac{\pi}{2}-\frac{\sigma}{T}, \quad \sigma\to0+.
\]
Therefore, \eqref{eq:sdsadfewr} implies that 
\begin{equation}\label{eq:as-h01}
    h_j(\sigma)=O(\sigma^{j-1}), \quad \sigma\to0+, \ 0<j<1.
 \end{equation}

Combining \eqref{eq:as-h1}, \eqref{eq:as-h2}, \eqref{eq:as-h12}, \eqref{eq:as-h01} with \eqref{eq:def_gj}, we have that 
\begin{equation}\label{eq:cool}
      h_j(\sigma)=O(g_j(\sigma)), \quad \sigma\to0+, \ j>0,
\end{equation}
that, together with \eqref{eq:ineqforAs} and \eqref{loglim}, yield $\lim\limits_{\sigma\to  0+} A_j(\sigma) =0$.

Take now an arbitrary $\beta\in(0,\mu)$ and consider, for each $T>0$, the set 
\begin{equation}\label{eq:Kset}
    K_{\beta,\mu,T}:=\bigl\{z\in\mathbb{C} \bigm| \beta\leq \Re z \leq \mu,\ \lvert \Im z \rvert \leq T\bigr\}.
\end{equation}
Let $0<\sigma<\mu^2$; since 
 $F\in C(K_{\sqrt{\sigma},\mu,T})$, there exists $C_1>0$ such that $|F(z)|\leq C_1$, $z\in K_{\sqrt{\sigma},\mu,T}$. Therefore,  
 \begin{align}
   B_j(\sigma) & \leq \sigma^{j-1}\sup_{|\tau|\leq\sqrt{\sigma}}\bigl\lvert F(\mu-2\sigma-i\tau)-F(\mu)\bigr\rvert  \int\limits_{|\tau|\leq\sqrt{\sigma}} \Bigl\lvert\dfrac{1}{(2\sigma+i\tau)^j}-\dfrac{1}{(\sigma+i\tau)^j}\Bigr\rvert d\tau \notag\\
    & \quad+ 2C_1 \sigma^{j-1}\int\limits_{\sqrt{\sigma}\leq|\tau|\leq T}\Bigl\lvert \dfrac{1}{(2\sigma+i\tau)^j}-\dfrac{1}{(\sigma+i\tau)^j}\Bigr\rvert d\tau.\label{Bsigmaest}
\end{align}
 
Next, since
\begin{align*}
j \biggl\lvert \dfrac{1}{(2\sigma+i\tau)^j}-\dfrac{1}{(\sigma+i\tau)^j}\biggr\rvert
&=\biggl\lvert \int_{\sigma+i\tau}^{2\sigma+i\tau}\dfrac{1}{z^{j+1}}\,dz \biggr\rvert
=\biggl\lvert \int_0^1\dfrac{\sigma}{\bigl((1+t)\sigma+i \tau\bigr)^{j+1}}\,dt \biggr\rvert
\\&\leq \int_0^1\frac{\sigma}{\bigl((1+t)^2\sigma^2+\tau^2\bigr)^{\frac{j+1}{2}}}\,dt\leq
\frac{\sigma}{\bigl(\sigma^2+\tau^2\bigr)^{\frac{j+1}{2}}},
\end{align*}
we can continue \eqref{Bsigmaest} as follows, cf. \eqref{eq:ssa},
\begin{align}
  j B_j(\sigma) & \leq \sup_{|\tau|\leq\sqrt{\sigma}}\bigl\lvert F(\mu-2\sigma-i\tau)-F(\mu)\bigr\rvert \  h_{j+1}(\sigma) \notag\\
    & \quad+ 4C_1 \int\limits_{\sqrt{\sigma}\leq \tau \leq T}\frac{\sigma^j}{\bigl(\sigma^2+\tau^2\bigr)^{\frac{j+1}{2}}} d\tau.\label{Bsigmaest2}
\end{align}

By \eqref{eq:as-h2} and \eqref{eq:as-h12}, functions $h_{j+1}$ are bounded on $(0,\infty)$ for all $j>0$. Next, since $F$ is uniformly continuous on $K_{\sqrt{\sigma},\mu,T}$, we have that, for any $\eps>0$ there exists $\delta>0$ such that
$f(\mu,\sigma,\tau):=\bigl\lvert F(\mu-2\sigma-i\tau)-F(\mu)\bigr\rvert<\eps$,
if only $4\sigma^2+\tau^2<\delta$. Therefore, if $\sigma>0$ is such that $4\sigma^2+\sigma<\delta$ then $\sup_{|\tau|\leq\sqrt{\sigma}}f(\mu,\sigma,\tau)<\eps$ hence
\begin{equation}\label{eq:sdasd111}
    \sup_{|\tau|\leq\sqrt{\sigma}}\bigl\lvert F(\mu-2\sigma-i\tau)-F(\mu)\bigr\rvert \ h_{j+1}(\sigma)\to 0, \quad \sigma\to0+.
\end{equation}
Finally, making again the substitution $\tau=\sigma \tan t$ in the integral in \eqref{Bsigmaest2}, we obtain that it is equal to
\[
  I_j:=\int_{\arctan\frac{\sqrt{\sigma}}{\sigma}}^{\arctan\frac{T}{\sigma}}(\cos t)^{j-1}\, d t.
\]
Similarly, to the above, for $j\geq 1$, 
\[
  I_j\leq \arctan\frac{T}{\sigma}-\arctan\frac{\sqrt{\sigma}}{\sigma},
\]
and, for $0<j<1$,
\[
  I_j=\int_{\frac{\pi}{2}-\arctan\frac{T}{\sigma}}^{\frac{\pi}{2}-\arctan\frac{\sqrt{\sigma}}{\sigma}} \frac{1}{(\sin t)^{1-j}}\,dt\leq \frac{2^{1-j}}{j}  \biggl( \Bigl( \frac{\pi}{2}-\arctan\frac{\sqrt{\sigma}}{\sigma}\Bigr)^{j}
  -\Bigl( \frac{\pi}{2}-\arctan\frac{T}{\sigma}\Bigr)^{j}\biggr).
\]
As a result, $I_j\to0$ as $\sigma\to0+$, that, together with \eqref{eq:sdasd111} and  \eqref{Bsigmaest2}, proves that   $B_j(\sigma) \to 0$, $\sigma\to  0+$.

Combining this with $A_j(\sigma)\to 0$, one gets \eqref{eq:eta} from \eqref{ABexpans}; and we can apply Proposition~\ref{lem:EII}. Namely, by \eqref{eq:alpha},
there exist $C>0$ and $t_0\geq1$, such that
\[
  \frac{D}{\Gamma(j)} e^{at}t^{j-1}\leq \varphi(t)e^{(\mu+a)t} \leq \left\{ \frac{D}{\Gamma(j)}+C\rho(t) \right\}e^{at}t^{j-1},\quad t\geq t_0.
\]
By \eqref{eq:eta} and \eqref{eq:rho}, $\rho(t)\to  0$ as $t\to \infty$. Therefore,
\[
  \varphi(t)e^{(\mu+a)t}\sim \frac{D}{\Gamma(j)} e^{at}t^{j-1},\quad t\to  \infty,
\]
that is equivalent to \eqref{eq:psi_asympt} and finishes the proof.
\end{proof}

The following simple Proposition show that if $F$ in \eqref{eq:sing_repres} is holomorphic on the line $\{\Re z =\mu\}$, then \eqref{loglim} holds.
\begin{proposition}\label{prop:Carr_Chmaj}
Let $\varphi:\R_+\to  \R_+$ be a non-increasing function such that, for some $\mu>0$, $\nu>0$,
\eqref{assum:1}--\eqref{eq:psi_onesided_lap} hold.
  Suppose also that there exist $j>0$ and $F,H\in \mathcal{A}(0<\Re z\leq \mu)$, such that
  \eqref{eq:sing_repres} holds.
  Then $\varphi$ has the asymptotic \eqref{eq:psi_asympt}.
\end{proposition}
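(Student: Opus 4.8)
The plan is to show that, under the stronger hypothesis $F,H\in\An(0<\Re z\leq\mu)$, all the assumptions of Theorem~\ref{thm:tauber} are satisfied, the only nontrivial point being condition~\eqref{loglim}. First I would observe that the remaining hypotheses come for free: the assumptions \eqref{assum:1}--\eqref{eq:psi_onesided_lap} are assumed verbatim, and since holomorphic functions are continuous, $F,H\in\An(0<\Re z\leq\mu)$ certainly gives $H\in\An(0<\Re z\leq\mu)$ and $F\in\An(0<\Re z<\mu)\cap C(0<\Re z\leq\mu)$, so that the representation \eqref{eq:sing_repres} furnishes Assumption~1 of Theorem~\ref{thm:tauber}. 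Thus the whole content of the Proposition reduces to verifying \eqref{loglim}.

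The key observation is that holomorphy of $F$ \emph{up to and including} the line $\{\Re z=\mu\}$ upgrades the mere uniform continuity used inside the proof of Theorem~\ref{thm:tauber} to a quantitative $O(\sigma)$ estimate. Fix $T>0$ and choose any $\beta\in(0,\mu)$. Since $F$ is holomorphic on an open neighborhood of the compact rectangle $K_{\beta,\mu,T}$ from \eqref{eq:Kset}, its derivative $F'$ is continuous, hence bounded, on $K_{\beta,\mu,T}$; set $C:=\sup_{z\in K_{\beta,\mu,T}}\lvert F'(z)\rvert$. For $0<\sigma<\tfrac12(\mu-\beta)$ and $\lvert\tau\rvert\leq T$, the segment joining $\mu-\sigma-i\tau$ to $\mu-2\sigma-i\tau$ stays in $K_{\beta,\mu,T}$, so the fundamental theorem of calculus yields
\[
  \bigl\lvert F(\mu-2\sigma-i\tau)-F(\mu-\sigma-i\tau)\bigr\rvert
  =\left\lvert\int_{\mu-\sigma-i\tau}^{\mu-2\sigma-i\tau}F'(z)\,dz\right\rvert\leq C\sigma .
\]
Taking the supremum over $\lvert\tau\rvert\leq T$ and multiplying by $g_j(\sigma)$ bounds the quantity in \eqref{loglim} by $C\,\sigma\, g_j(\sigma)$.

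It then remains to check that $\sigma\,g_j(\sigma)\to0$ as $\sigma\to0+$ in each regime of \eqref{eq:def_gj}: for $0<j<1$ one has $\sigma\,g_j(\sigma)=\sigma^{j}\to0$; for $j=1$, $\sigma\,g_j(\sigma)=\sigma\log\sigma\to0$; and for $j>1$, $\sigma\,g_j(\sigma)=\sigma\to0$. In every case the limit is zero, so \eqref{loglim} holds, and an application of Theorem~\ref{thm:tauber} gives the asymptotic \eqref{eq:psi_asympt}.

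There is no serious obstacle here; the statement is essentially a corollary. The one point deserving care is the interpretation of $\An(0<\Re z\leq\mu)$ as holomorphy on an open neighborhood of the closed half-strip, which is precisely what makes $F'$ available and bounded on the compact rectangle all the way up to the line $\{\Re z=\mu\}$, and hence what lets the linear factor $\sigma$ absorb the possibly divergent weight $g_j(\sigma)$ in the boundary cases $0<j\leq1$.
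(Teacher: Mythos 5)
Your proof is correct and follows essentially the same route as the paper's: both reduce the Proposition to verifying condition \eqref{loglim} by bounding $\lvert F(\mu-2\sigma-i\tau)-F(\mu-\sigma-i\tau)\rvert\leq C\sigma$ via boundedness of $F'$ on the compact rectangle $K_{\beta,\mu,T}$ of \eqref{eq:Kset}, and then noting that $\sigma g_j(\sigma)\to 0$ as $\sigma\to 0+$ in all three regimes of \eqref{eq:def_gj}. The only (immaterial) difference is that you derive the Lipschitz estimate by integrating $F'$ along the horizontal segment, whereas the paper invokes a complex mean-value-type theorem \cite{EJ1992}; your justification is, if anything, more self-contained.
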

\begin{proof}
Take any $\beta\in(0,\mu)$ and $T>0$. Let $K_{\beta,\mu,T}$ be defined by \eqref{eq:Kset}.
  Since $F\in \mathcal{A}(0<\Re z\leq \mu)$, then $F'\in C(K_{\beta,\mu,T})$, and hence $F'$ is bounded on $K_{\beta,\mu,T}$. Then one can apply a mean-value-type theorem for complex-valued functions, see e.g. \cite[Theorem~2.2]{EJ1992}, to get that, for some $K>0$,
\[
|F(\mu-2\sigma-i\tau)-F(\mu-\sigma-i\tau)|\leq K |\sigma|, \qquad 2\sigma<\mu- \beta,
\]
that yields \eqref{loglim} for all $j>0$, cf.~\eqref{eq:def_gj}. Hence we can apply Theorem~\ref{thm:tauber}.
\end{proof}

\begin{remark}
Note that, for $F\in \mathcal{A}(0<\Re z\leq \mu)$  in \eqref{eq:sing_repres}, the holomorphic function $H$ is redundant there, as we always can replace $F(z)$ by a holomorphic function $F(z)+H(z)(\mu-z)^j$. Therefore, Proposition~\ref{prop:Carr_Chmaj} corresponds to \cite[Proposition~2.3]{CC2004}.
\end{remark}

\begin{proposition}\label{prop:Diekmann_Kaper}
Let $\varphi:\R_+\to  \R_+$ be a non-increasing function such that, for some $\mu>0$, $\nu>0$,
\eqref{assum:1}--\eqref{eq:psi_onesided_lap} hold. Suppose also that there exist $j\geq1$, $D>0$,  and  $h:\R\to\R$ such that
    \begin{equation}\label{eq:DK}
            H(z):=\int_0^\infty e^{z t} \varphi(t) dt - \frac{D}{(\mu-z)^j} \to h\bigl(\Im z \bigr), \quad \Re z\to\mu -,   
    \end{equation}
uniformly (in $\Im z$) on compact subsets of $\R$. 
  Then the following asymptotic holds, 
  \begin{equation}\label{eq:Diekmann_Karper_type}
    \varphi(t) \sim \frac{D}{\Gamma(j)} t^{j-1} e^{-\mu t},\quad t\to\infty.
  \end{equation}
\end{proposition}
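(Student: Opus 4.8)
The plan is to apply Proposition~\ref{lem:EII} along exactly the lines of the proof of Theorem~\ref{thm:tauber}, taking the constant $F\equiv D$ in the representation \eqref{eq:sing_repres}, but verifying the crucial hypothesis \eqref{eq:eta} directly from \eqref{eq:DK} rather than from holomorphy of $H$ up to the line $\{\Re z=\mu\}$. First I would fix $a>0$ with $\mu+a>\nu$ and set $\alpha(t):=e^{(\mu+a)t}\varphi(t)$, which is non-decreasing by \eqref{assum:1}. Exactly as in the proof of Theorem~\ref{thm:tauber}, the assumptions \eqref{assum:1}--\eqref{eq:psi_onesided_lap} together with the representation \eqref{eq:DK} ensure that both integrals in \eqref{eq:psi_iprime} converge for $0<\Re z<\mu$, so that by \cite[Corollary~II.1.1a, Theorem~II.2.3a]{Wid1941} the transform $\int_0^\infty e^{-(a+z)t}\,d\alpha(t)$ converges for all $\Re z>0$ and \eqref{eq:psi_i} is available.

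Next I would read off $G$ from \eqref{eq:G} with this $D$. Writing \eqref{eq:DK} as $\int_0^\infty e^{(\mu-z)t}\varphi(t)\,dt=\frac{D}{z^{j}}+H(\mu-z)$ (that is, \eqref{eq:sing_repres} with $F\equiv D$) and substituting into \eqref{eq:psi_i}, the singular terms $D/z^{j}$ cancel and one is left with the clean expression
\[
  G(z)=H(\mu-z)-\frac{\varphi(0)}{a+z},\qquad 0<\Re z<\mu.
\]
This cancellation is the whole point of the choice $F\equiv D$: the quantities $A_j(\sigma)$ and $B_j(\sigma)$ of \eqref{ABexpans}, which carry all the difficulty in Theorem~\ref{thm:tauber}, vanish identically here, and only the smooth remainder $G$ has to be estimated in \eqref{eq:eta}.

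It then remains to check \eqref{eq:eta}. For $0<\sigma<\tfrac{\mu}{2}$ and $|\tau|\le T$ the arguments $\mu-2\sigma-i\tau$ and $\mu-\sigma-i\tau$ have real part tending to $\mu$ from below, so \eqref{eq:DK} gives, uniformly for $|\tau|\le T$, that $H(\mu-2\sigma-i\tau)\to h(-\tau)$ and $H(\mu-\sigma-i\tau)\to h(-\tau)$ as $\sigma\to0+$; the remaining contribution of $\varphi(0)/(a+z)$ is $O(\sigma)$ uniformly, since $a>0$ keeps the denominators bounded away from zero. Hence $\omega(\sigma):=\sup_{|\tau|\le T}\lvert G(2\sigma+i\tau)-G(\sigma+i\tau)\rvert\to0$ as $\sigma\to0+$, and therefore
\[
  \eta(\sigma,T)=\sigma^{j-1}\int_{-T}^{T}\lvert G(2\sigma+i\tau)-G(\sigma+i\tau)\rvert\,d\tau\le 2T\,\sigma^{j-1}\,\omega(\sigma).
\]
Since $j\ge1$ keeps $\sigma^{j-1}$ bounded as $\sigma\to0+$, the right-hand side tends to $0$, which is \eqref{eq:eta}. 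With this in hand, Proposition~\ref{lem:EII} yields \eqref{eq:alpha} with $\rho(t)\to0$, and unwinding $\alpha(t)=e^{(\mu+a)t}\varphi(t)$ exactly as at the end of the proof of Theorem~\ref{thm:tauber} produces \eqref{eq:Diekmann_Karper_type}.

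The step I expect to require the most care is the final estimate $\eta(\sigma,T)\le 2T\sigma^{j-1}\omega(\sigma)$, as this is precisely where the hypothesis $j\ge1$ is indispensable. For $0<j<1$ the factor $\sigma^{j-1}$ blows up, and merely knowing $\omega(\sigma)\to0$ would not suffice: one would need a quantitative rate of convergence in \eqref{eq:DK}. Thus the weaker, purely uniform (as opposed to Lipschitz or holomorphic) boundary behaviour of $H$ assumed here is enough only under $j\ge1$, in contrast with the holomorphy-up-to-the-line hypothesis exploited in Theorem~\ref{thm:tauber} and Proposition~\ref{prop:Carr_Chmaj}.
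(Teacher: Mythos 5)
Your proposal is correct and follows exactly the paper's own route: the same choice of $\alpha(t)=e^{(\mu+a)t}\varphi(t)$, the same reduction via \eqref{eq:psi_i} to $G(z)=H(\mu-z)-\varphi(0)/(a+z)$, and an application of Proposition~\ref{lem:EII}. In fact you supply more detail than the paper, whose proof states only that \eqref{eq:DK} implies \eqref{eq:eta}; your estimate $\eta(\sigma,T)\leq 2T\sigma^{j-1}\omega(\sigma)$, and your observation that this is precisely where $j\geq1$ is needed, is the intended justification of that step.
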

\begin{proof}
Let $a>\max\{0,\nu-\mu\}$ and $\alpha(t)$ be given by \eqref{eq:alpha-new}. 
Let $G$ be given by~\eqref{eq:G}. Similarly to the proof of Theorem~\ref{thm:tauber}, we will get from \eqref{eq:psi_i} and \eqref{eq:DK}, that
  \begin{equation*}
    G(z) =H(\mu-z)-\dfrac{\varphi(0)}{a+z},\quad 0<\Re z<\mu.
  \end{equation*}
  Next, 
  \eqref{eq:DK} 
  implies \eqref{eq:eta}. 
  Hence, by Lemma~\ref{lem:EII}, \eqref{eq:Diekmann_Karper_type} holds that fulfilled the proof.
\end{proof}

Note that the result in \cite[Lemma~6.1]{DK1978} corresponds to $j=1$ in Proposition~\ref{prop:Diekmann_Kaper}.

\begin{remark}
It is worth noting that, for the case $j>1$, we have, by \eqref{eq:etaform}, that if $G$ is bounded, then \eqref{eq:eta} holds. Therefore, in this case, it is enough to assume that $H$ in \eqref{eq:DK} is bounded to conclude \eqref{eq:Diekmann_Karper_type}.
\end{remark}

\end{document}